\numberwithin{equation}{section} 
\numberwithin{figure}{section} 
  \theoremstyle{plain}
  \newtheorem{thm}{Theorem}[section]
  \theoremstyle{remark}
  \newtheorem*{acknowledgement*}{Acknowledgement}
\begin{document}

\title{Sums of Zeros for Certain Special Functions}

\author{Ruiming Zhang}

\begin{abstract}
In this work we evaluates sums of the zeros for the Bessel function
$J_{\nu}(z)$, the Airy function $A(z)$, the Riemann zeta function
$\zeta(z)$, $L$-series $L(s,\chi)$ with real primitive characters,
Ramanujan's entire function (a.k.a. $q$-Airy function) $A_{q}(z)$,
$q$-Bessel function $J_{\nu}^{(2)}(z;q)$. 
\end{abstract}

\subjclass[2000]{Primary 30E15; Secondary 33D45. }

\curraddr{School of Mathematical Sciences\\
Guangxi Normal University\\
Guilin City, Guangxi 541004\\
P. R. China.}

\keywords{\noindent $q$-Airy function (Ramanujan's entire function); $q$-Bessel
function; Bessel function; Airy function; Riemann zeta function; Dirichlet
$L$-series.}

\email{ruimingzhang@yahoo.com}

\maketitle

\section{Introduction}

Given an entire function $f(z)$, it is interesting to find formulas
for various sums of zeros for $f(z)$. Generally speaking, it is relatively
easy to calculate the multiple sums and hard to find the related power
sums. In this work we prove an identity for certain class of entire
functions that is very similar to the formulas for polynomials. This
class includes the Bessel function $J_{\nu}(z)$, the Riemann zeta
function $\zeta(z)$, $L$-series $L(s,\chi)$ with real primitive
characters, Ramanujan's entire function (a.k.a. $q$-Airy function)
$A_{q}(z)$, $q$-Bessel function $J_{\nu}^{(2)}(z;q)$. Using this
identity we give closed form evaluations of certain multiple sums
and power sums of the zeros of these functions. 

The work is divided into four sections, in section \ref{sec:Preliminaries}
we present some facts on the special functions; We state and derive
the identity in section \ref{sec:Main-Results}; In the section \ref{sec:Applications}
we apply the identity to some special functions.

\section{Preliminaries\label{sec:Preliminaries}}

\subsection{Bessel type functions \cite{Abramowitz,Andrews2,Erdelyi,Szego,Watson}}

For any complex number $z\in\mathbb{C}$, the Euler's $\Gamma(z)$
is defined as \begin{align}
\frac{1}{\Gamma(z)}: & =z\prod_{j=1}^{\infty}\left(1+\frac{z}{j}\right)\left(1+\frac{1}{j}\right)^{-z}.\label{eq:1}\end{align}
The Bessel functions $J_{\nu}(z)$ is defined as\begin{align}
J_{\nu}(z): & =\sum_{n=0}^{\infty}\frac{(-1)^{n}}{\Gamma(n+\nu+1)n!}\left(\frac{z}{2}\right)^{\nu+2n}.\label{eq:2}\end{align}
It is well known that, for $\nu>-1$, the zeros of the even entire
function $z^{-\nu}J_{\nu}(z)$ are real and simple. We denote the
positive zeros as\begin{align}
0 & <j_{\nu,1}<j_{\nu,2}<\dots<j_{\nu,n},\dots.\label{eq:3}\end{align}
They satisfy the following identity \begin{align}
\sum_{n=1}^{\infty}\frac{1}{j_{\nu,n}^{2}} & =\frac{1}{4(\nu+1)}.\label{eq:4}\end{align}
The Bessel functions have the infinite product expansion \begin{align}
J_{\nu}(z) & =\frac{z^{\nu}}{2^{\nu}\Gamma(\nu+1)}\prod_{n=1}^{\infty}\left(1-\frac{z^{2}}{j_{\nu,n}^{2}}\right).\label{eq:5}\end{align}
The Bessel function $J_{\nu}(z)$ also satisfies the following second
order differential equation\begin{align}
z^{2}\frac{d^{2}y(z)}{dz^{2}}+z\frac{dy(z)}{dz}+(z^{2}-\nu^{2})y(z) & =0.\label{eq:6}\end{align}
The Airy function is defined as \cite{Szego} \begin{align}
A(z): & =\frac{\pi}{3}\sum_{n=0}^{\infty}\frac{1}{\Gamma(n+2/3)n!}\left(-\frac{z}{3}\right)^{3n}+\frac{\pi z}{9}\sum_{n=0}^{\infty}\frac{1}{\Gamma(n+4/3)n!}\left(-\frac{z}{3}\right)^{3n}.\label{eq:7}\end{align}
Clearly \cite{Abramowitz,Vallee}, \begin{align}
A(z) & =\frac{\pi}{\sqrt[3]{3}}\mathrm{Ai}\left(-\frac{z}{\sqrt[3]{3}}\right).\label{eq:8}\end{align}
The Airy function $A(z)$ satisfies the following second order differential
equation\begin{align}
y''(z)+\frac{z}{3}y(z) & =0.\label{eq:9}\end{align}
It is known that $A(z)$ has infinitely many real zeros, all of them
are positive and simple. Let us denote them as\begin{align}
0 & <i_{1}<i_{2}<\dots.\label{eq:10}\end{align}
It is also known that\begin{align}
i_{\nu} & \sim\nu^{2/3}\label{eq:11}\end{align}
as $\nu\to\infty$. 

From the infinite product expansion for $\mathrm{Ai}(z)$ \cite{Vallee}
we get\begin{align}
A(z) & =\frac{\pi}{3}\frac{e^{\kappa z}}{\Gamma(2/3)}\prod_{n=1}^{\infty}\left(1-\frac{z}{i_{n}}\right)e^{z/i_{n}},\label{eq:12}\end{align}
where\begin{align}
\kappa^{2}: & =\sum_{n=1}^{\infty}\frac{1}{i_{n}^{2}}=\frac{3\Gamma(2/3)^{4}}{4\pi^{2}},\quad k>0.\label{eq:13}\end{align}
Thus,\begin{align}
\frac{9\Gamma(2/3)^{2}A(\sqrt{z})A(-\sqrt{z})}{\pi^{2}} & =\prod_{n=1}^{\infty}\left(1-\frac{z}{i_{n}^{2}}\right).\label{eq:14}\end{align}
From the product formula \begin{align}
\mathrm{Ai}(z)\mathrm{Ai}(-z) & =\frac{1}{\sqrt[3]{2}\pi}\int_{-\infty}^{\infty}\mathrm{Ai}(\sqrt[3]{4}t^{2})e^{2izt}dt\label{eq:15}\end{align}
we obtain\begin{align}
\prod_{n=1}^{\infty}\left(1-\frac{z}{i_{n}^{2}}\right) & =2^{5/3}3^{4/3}\Gamma\left(\frac{2}{3}\right)^{2}\int_{0}^{\infty}\mathrm{Ai}(2^{2/3}t^{2})\cos\left(\frac{2tz^{1/2}}{\sqrt[3]{3}}\right)dt\label{eq:16}\\
 & =2^{5/3}3^{4/3}\Gamma\left(\frac{2}{3}\right)^{2}\sum_{n=0}^{\infty}\frac{(-1)^{n}m_{n}}{(2n)!}\left(\frac{2}{\sqrt[3]{3}}\right)^{2n}z^{n},\nonumber \end{align}
where\begin{align}
m_{n}: & =\int_{0}^{\infty}\mathrm{Ai}(2^{2/3}t^{2})t^{2n}dt,\quad n\in\mathbb{N}\cup\left\{ 0\right\} .\label{eq:17}\end{align}
From \begin{align}
\mathrm{Ai}\left(\left(\frac{3x}{2}\right)^{2/3}\right) & =\frac{1}{\pi\sqrt{3}}\left(\frac{3x}{2}\right)^{2/3}K_{1/3}(x)\label{eq:18}\end{align}
we obtain\begin{align}
\mathrm{Ai}(z) & =\frac{z^{1/2}}{\pi\sqrt{3}}K_{1/3}\left(\frac{2z^{3/2}}{3}\right).\label{eq:19}\end{align}
Hence,\begin{align}
m_{n} & =\frac{3^{(2n-1)/3-1/2}}{\pi2^{(4n)/3+1}}\int_{0}^{\infty}K_{1/3}(u)u^{(2n-1)/3}du\label{eq:20}\\
 & =\frac{3^{(2n-1)/3-1/2}}{\pi2^{(2n+7)/3}}\Gamma\left(\frac{n}{3}+\frac{1}{6}\right)\Gamma\left(\frac{n}{3}+\frac{1}{2}\right).\nonumber \end{align}
Then,\begin{align}
\prod_{n=1}^{\infty}\left(1-\frac{z}{i_{n}^{2}}\right) & =\sum_{n=0}^{\infty}(-1)^{n}\alpha_{n}z^{n},\label{eq:21}\end{align}
where\begin{align}
\alpha_{n}: & =\frac{\sqrt{3}\Gamma\left(\frac{2}{3}\right)^{2}}{\sqrt[3]{4}\pi}\frac{16^{n/3}\Gamma\left(\frac{n}{3}+\frac{1}{6}\right)\Gamma\left(\frac{n}{3}+\frac{1}{2}\right)}{(2n)!}.\label{eq:22}\end{align}

\subsection{$q$-Series \cite{Andrews2,Gasper}}

Assume that $0<q<1$, let\begin{align}
(z;q)_{\infty}: & =\prod_{n=0}^{\infty}(1-zq^{n}),\label{eq:23}\end{align}
 \begin{align}
(z;q)_{n}: & =\frac{(z;q)_{\infty}}{(zq^{n};q)_{\infty}},\label{eq:24}\end{align}
and\begin{align}
(z_{1},z_{2},\dots,z_{m};q)_{n}: & =\prod_{j=1}^{m}(z_{j};q)_{n}\label{eq:25}\end{align}
for any $m\in\mathbb{N}$,$n\in\mathbb{Z}$ and $z,z_{1},z_{2},\dots,z_{m}\in\mathbb{C}$.

The $q$-Bessel functions $J_{\nu}^{(2)}(z;q)$ is defined as

\begin{align}
J_{\nu}^{(2)}(z;q): & =\frac{(q^{\nu+1};q)_{\infty}}{(q;q)_{\infty}}\sum_{n=0}^{\infty}\frac{\left(-q^{n+\nu}\right)^{n}}{(q,q^{\nu+1};q)_{n}}\left(\frac{z}{2}\right)^{\nu+2n}.\label{eq:26}\end{align}
It is known that for $\nu>-1$, all the zeros of $z^{-\nu}J_{\nu}^{(2)}(z;q)$
are real and simple. We denote the positive zeros as\begin{align}
0 & <j_{\nu,1}(q)<j_{\nu,2}(q)<\dots<j_{\nu,n}(q)<\dots.\label{eq:27}\end{align}
It is known that \cite{Hayman,Ismail} \begin{align}
\sum_{n=1}^{\infty}\frac{1}{j_{\nu,n}(q)} & <\infty.\label{eq:28}\end{align}
Then,\begin{align}
\left(\frac{z}{2}\right)^{-\nu}J_{\nu}^{(2)}(z;q) & =\frac{(q^{\nu+1};q)_{\infty}}{(q;q)_{\infty}}\prod_{n=1}^{\infty}\left(1-\frac{z^{2}}{j_{\nu,n}^{2}(q)}\right).\label{eq:29}\end{align}
The Ramanujan's entire function (a.k.a. $q$-Airy function) $A_{q}(z)$
is defined as \cite{Andrews2,Ismail} \begin{align}
A_{q}(z): & =\sum_{n=0}^{\infty}\frac{q^{n^{2}}(-z)^{n}}{(q;q)_{n}}.\label{eq:30}\end{align}
It is known that all the zeros of $A_{q}(z)$ are positive and simple,
we let them be\begin{align}
0 & <i_{1}(q)<i_{2}(q)<\dots<i_{n}(q)<\dots.\label{eq:31}\end{align}
It is also known that \begin{align}
\sum_{n=1}^{\infty}\frac{1}{i_{n}(q)} & <\infty.\label{eq:32}\end{align}
Hence,\begin{align}
A_{q}(z) & =\prod_{n=1}^{\infty}\left(1-\frac{z}{i_{n}(q)}\right).\label{eq:33}\end{align}

\subsection{The Riemann Zeta Function $\zeta(s)$ \cite{Abramowitz,Andrews2,Davenport,Hua,Titchmarsh}}

The Riemann zeta function $\zeta(s)$ is defined as\begin{align}
\zeta(s): & =\sum_{n=1}^{\infty}\frac{1}{n^{s}},\quad\Re(s)>1.\label{eq:34}\end{align}
The entire functions \begin{align}
\xi(z): & =\frac{z(z-1)}{2\pi^{z/2}}\Gamma\left(\frac{z}{2}\right)\zeta(z),\label{eq:35}\end{align}
and \begin{align}
\Xi(z): & =\xi\left(\frac{1}{2}+iz\right)\label{eq:36}\end{align}
are of order $1$. They satisfy the following functional equations,\begin{align}
\xi(z) & =\xi(1-z),\label{eq:37}\end{align}
and \begin{align}
\Xi(z) & =\Xi(-z).\label{eq:38}\end{align}
The function $\Xi(z)$ has an integral representation\begin{align}
\Xi(z) & =\int_{0}^{\infty}\phi(t)\cos(zt)dt,\label{eq:39}\end{align}
where\begin{align}
\phi(t): & =4\pi\sum_{n=1}^{\infty}\left\{ 2\pi n^{4}e^{-9t/2}-3n^{2}e^{-5t/2}\right\} \exp\left(-n^{2}\pi e^{-2t}\right).\label{eq:40}\end{align}
Evidently,\begin{align}
\Xi(z) & =\sum_{n=0}^{\infty}\frac{(-1)^{n}b_{n}}{(2n)!}z^{2n},\label{eq:41}\end{align}
where\begin{align}
b_{n}: & =\int_{0}^{\infty}t^{2n}\phi(t)dt.\label{eq:42}\end{align}
It is well known that $\phi(t)$ is positive, even and fast decreasing
on $\mathbb{R}$. From formula \eqref{eq:42}, it is clear that \begin{align}
b_{0} & =\Xi(0)>0,\label{eq:43}\end{align}
we list all its zeros with positive real part first according to their
real parts, then their imaginary parts, \begin{equation}
z_{1},z_{2},\dots,z_{n},\dots.\label{eq:44}\end{equation}
Then,\begin{align}
\Xi(z) & =b_{0}\prod_{n=1}^{\infty}\left(1-\frac{z^{2}}{z_{n}^{2}}\right).\label{eq:45}\end{align}
Thus,\begin{align}
\prod_{n=1}^{\infty}\left(1-\frac{z}{z_{n}^{2}}\right) & =\frac{\Xi(\sqrt{z})}{b_{0}}=\sum_{n=0}^{\infty}(-1)^{n}\beta_{n}z^{n},\label{eq:46}\end{align}
where\begin{align}
\beta_{0}:=1,\quad\beta_{n}: & =\frac{b_{n}}{(2n)!b_{0}},\quad n\in\mathbb{N}.\label{eq:47}\end{align}
More generally, let $\chi(n)$ be a real primitive character to modulus
$m$. The function $L(s,\chi)$ is defined as \begin{align}
L(s,\chi): & =\sum_{n=1}^{\infty}\frac{\chi(n)}{n^{s}},\quad\Re(s)>1.\label{eq:48}\end{align}
Let\begin{align}
a: & =\begin{cases}
0, & \chi(-1)=1\\
1, & \chi(-1)=-1\end{cases},\label{eq:49}\end{align}
it is known that for\begin{align}
\xi(s|\chi,a): & =\left(\frac{\pi}{m}\right)^{-(s+a)/2}\Gamma\left(\frac{s+a}{2}\right)L(s,\chi),\label{eq:50}\end{align}
then,\begin{align}
\xi(1-s|\chi,a) & =\xi(s|\chi,a).\label{eq:51}\end{align}
It is also known that the entire function\begin{align}
\Xi(z|\chi,a): & =\xi\left(\frac{1}{2}+iz|\chi,a\right)\label{eq:52}\end{align}
is of order $1$, even and has an integral representation\begin{align}
\Xi(z|\chi,a) & =\frac{1}{2}\int_{-\infty}^{\infty}e^{-izt}\phi(t|\chi,a)dt,\label{eq:53}\end{align}
where\begin{align}
\phi(t|\chi,0): & =2e^{-t/2}\sum_{n=-\infty}^{\infty}\chi(n)e^{-n^{2}\pi e^{-2t}/m},\label{eq:54}\end{align}
and\begin{align}
\phi(t|\chi,1): & =2e^{-3t/2}\sum_{n=-\infty}^{\infty}n\chi(n)e^{-n^{2}\pi e^{-2t}/m}.\label{eq:55}\end{align}
From $a=0$, \begin{align}
\sum_{n=-\infty}^{\infty}\chi(n)e^{-n^{2}\pi x/m} & =\frac{1}{\sqrt{x}}\sum_{n=-\infty}^{\infty}\chi(n)e^{-n^{2}\pi/(mx)},\quad x>0,\label{eq:56}\end{align}
and $a=1$,\begin{align}
\sum_{n=-\infty}^{\infty}n\chi(n)e^{-n^{2}\pi x/m} & =x^{-3/2}\sum_{n=-\infty}^{\infty}n\chi(n)e^{-n^{2}\pi/(mx)},\quad x>0.\label{eq:57}\end{align}
It is easy to verify that\begin{align}
\phi(-t|\chi,a) & =\phi(t|\chi,a),\quad t\in\mathbb{R}.\label{eq:58}\end{align}
Then, \begin{align}
\Xi(z|\chi,a) & =\int_{0}^{\infty}\phi(t|\chi,a)\cos(zt)dt\label{eq:59}\\
 & =\sum_{n=0}^{\infty}\frac{(-1)^{n}b_{n}(\chi,a)z^{2n}}{(2n)!},\nonumber \end{align}
where\begin{align}
b_{n}(\chi,a): & =\int_{0}^{\infty}t^{2n}\phi(t|\chi,a)dt.\label{eq:60}\end{align}
Assume that\begin{equation}
b_{0}(\chi,a)=\Xi(0|\chi,a)=\xi\left(\frac{1}{2}|\chi,a\right)\neq0,\label{eq:61}\end{equation}
then formula \eqref{eq:59} says that $0$ is not a zero of $\Xi(z|\chi,a)$.
We list all its zeros with positive real part first according to their
real parts, then their imaginary parts, \begin{equation}
z_{1}(\chi,a),z_{2}(\chi,a),\dots,z_{n}(\chi,a),\dots.\label{eq:62}\end{equation}
Then,\begin{align}
\Xi(z|\chi,a) & =b_{0}(\chi,a)\prod_{n=1}^{\infty}\left(1-\frac{z^{2}}{z_{n}(\chi,a)^{2}}\right).\label{eq:63}\end{align}
Thus,\begin{align}
\prod_{n=1}^{\infty}\left(1-\frac{z}{z_{n}(\chi,a)^{2}}\right) & =\frac{\Xi(\sqrt{z}|\chi,a)}{b_{0}(\chi,a)}=\sum_{n=0}^{\infty}(-1)^{n}\beta_{n}(\chi,a)z^{n},\label{eq:64}\end{align}
where\begin{align}
\beta_{0}(\chi,a): & =1,\quad\beta_{n}(\chi,a):=\frac{b_{n}(\chi,a)}{(2n)!b_{0}(\chi,a)},\quad n\in\mathbb{N}.\label{eq:65}\end{align}

\section{Main Results\label{sec:Main-Results}}

\begin{thm}
\label{thm:3.1}Given a sequence of non-zero complex numbers $\left\{ \lambda_{n}\right\} _{n=1}^{\infty}\subset\mathbb{C}$
satisfying \begin{equation}
\sum_{n=1}^{\infty}\left|\lambda_{n}\right|<\infty.\label{eq:66}\end{equation}
Let\begin{align}
f(z): & =\prod_{n=1}^{\infty}(1-z\lambda_{n})=\sum_{n=0}^{\infty}(-1)^{n}\sigma_{n}z^{n},\label{eq:67}\end{align}
then\begin{align}
\frac{(-1)^{n}f^{(n)}(z)}{n!f(z)} & =\sum_{1\le k_{1}<k_{2}\dots<k_{n}}\frac{\lambda_{k_{1}}\lambda_{k_{2}}\dotsb\lambda_{k_{n}}}{(1-z\lambda_{k_{1}})(1-z\lambda_{k_{2}})\dotsb(1-z\lambda_{k_{n}})}\label{eq:68}\end{align}
for all $n\in\mathbb{N}$ and all $z\in\mathbb{C}$ which is not in
the sequence $\left\{ \lambda_{n}^{-1}\right\} _{n=1}^{\infty}$.
In particular, we have\begin{align}
\sigma_{0} & =1,\quad\sigma_{1}=\sum_{k=1}^{\infty}\lambda_{k},\label{eq:69}\end{align}
 and\begin{align}
\sigma_{n} & =\frac{(-1)^{n}f^{(n)}(0)}{n!}=\sum_{1\le k_{1}<k_{2}\dots<k_{n}}\lambda_{k_{1}}\lambda_{k_{2}}\dotsb\lambda_{k_{n}}.\label{eq:70}\end{align}
Let \begin{align}
s_{n}: & =\sum_{k=1}^{\infty}\lambda_{k}^{n},\quad n\in\mathbb{N},\label{eq:71}\end{align}
then for $n\in\mathbb{N}$ we have \begin{align}
s_{n} & =(-1)^{n-1}n\sigma_{n}+\sum_{j=1}^{n-1}(-1)^{j-1}\sigma_{j}s_{n-j}\label{eq:72}\end{align}
and \begin{align}
\frac{s_{n}}{c^{n}} & =\det\left(\begin{array}{cccccc}
1 & 0 & 0 & \dots & 0 & \frac{\sigma_{1}}{c}\\
-\frac{\sigma_{1}}{c} & 1 & 0 & \dots & 0 & -\frac{2\sigma_{2}}{c^{2}}\\
\frac{\sigma_{2}}{c^{2}} & -\frac{\sigma_{1}}{c} & 1 & \dots & 0 & \frac{3\sigma_{3}}{c^{3}}\\
\vdots & \vdots & \vdots & \ddots & \vdots & \vdots\\
\frac{(-1)^{n-2}\sigma_{n-2}}{c^{n-2}} & \frac{(-1)^{n-3}\sigma_{n-3}}{c^{n-3}} & \frac{(-1)^{n-4}\sigma_{n-4}}{c^{n-4}} & \dots & 1 & \frac{(-1)^{n-2}(n-1)\sigma_{n-1}}{c^{n-1}}\\
\frac{(-1)^{n-1}\sigma_{n-1}}{c^{n-1}} & \frac{(-1)^{n-2}\sigma_{n-2}}{c^{n-2}} & \frac{(-1)^{n-3}\sigma_{n-3}}{c^{n-3}} & \dots & -\frac{\sigma_{1}}{c} & \frac{(-1)^{n-1}n\sigma_{n}}{c^{n}}\end{array}\right),\label{eq:73}\end{align}

for any $c\neq0$.
\end{thm}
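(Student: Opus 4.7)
The plan is to prove the four assertions in sequence, with (\ref{eq:68}) as the pivot and the rest following by routine manipulations of power series and linear algebra.

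\textbf{Identity (\ref{eq:68}) via an auxiliary generating variable.} I would introduce a fresh variable $t$ and compute
\[
\prod_{k=1}^{\infty}\left(1+t\,\frac{\lambda_{k}}{1-z\lambda_{k}}\right)
=\prod_{k=1}^{\infty}\frac{1-(z-t)\lambda_{k}}{1-z\lambda_{k}}
=\frac{f(z-t)}{f(z)}.
\]
Under the hypothesis $\sum|\lambda_n|<\infty$, the function $f$ is entire and nonzero off the discrete set $\{\lambda_n^{-1}\}$, and on any bounded neighborhood of an admissible $z$ one has $|\lambda_k/(1-z\lambda_k)|\le C|\lambda_k|$ uniformly, so the product on the left converges absolutely and uniformly in $t$ on compacta. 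Its expansion in powers of $t$ may then be carried out term by term, yielding as $t^n$-coefficient precisely the infinite elementary symmetric sum displayed on the right of (\ref{eq:68}). On the other hand the Taylor expansion $f(z-t)/f(z)=\sum_{n\ge0}(-1)^{n}f^{(n)}(z)\,t^{n}/(n!\,f(z))$ furnishes the matching coefficient, and (\ref{eq:68}) follows by comparison.

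\textbf{Consequences (\ref{eq:69})--(\ref{eq:72}).} Setting $z=0$ in (\ref{eq:68}) collapses each factor $\lambda_{k_j}/(1-z\lambda_{k_j})$ to $\lambda_{k_j}$, identifying $\sigma_n$ with both the Taylor coefficient $(-1)^n f^{(n)}(0)/n!$ and the elementary symmetric sum, which proves (\ref{eq:69})--(\ref{eq:70}). For Newton's recurrence (\ref{eq:72}) I would take the logarithmic derivative
\[
-\frac{f'(z)}{f(z)}=\sum_{k=1}^{\infty}\frac{\lambda_{k}}{1-z\lambda_{k}}=\sum_{m=1}^{\infty}s_{m}z^{m-1},
\]
the double-sum rearrangement being justified by $\sum|\lambda_n|<\infty$, then cross-multiply by $f(z)=\sum_{n\ge0}(-1)^n\sigma_n z^n$ and match coefficients of $z^{n-1}$ to obtain $(-1)^{n-1}n\sigma_n=\sum_{j=0}^{n-1}(-1)^j\sigma_j s_{n-j}$, which rearranges into (\ref{eq:72}).

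\textbf{Determinant formula (\ref{eq:73}).} Collecting (\ref{eq:72}) for $k=1,\dots,n$ yields a lower-triangular linear system $A\vec s=\vec b$ in the unknowns $(s_1,\dots,s_n)$, with $1$'s on the diagonal, subdiagonal entries $A_{k,j}=(-1)^{k-j}\sigma_{k-j}$ for $j<k$, and right-hand side $b_k=(-1)^{k-1}k\sigma_k$. Because $\det A=1$, Cramer's rule expresses $s_n$ as the determinant of the matrix obtained from $A$ by substituting $\vec b$ for its last column. Passing to the rescaled variables $\tilde s_k=s_k/c^k$ simply divides the $(k,j)$-entry by $c^{k-j}$ and the right-hand side by $c^k$, which reproduces verbatim the array displayed in (\ref{eq:73}). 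The main obstacle in the whole argument is the first step: one must carefully identify the $t^n$-coefficient of a product of infinitely many linear factors in $t$ with the (possibly infinite) elementary symmetric sum over $n$-subsets of $\mathbb{N}$, which is precisely where the summability hypothesis $\sum|\lambda_n|<\infty$ is used; once absolute convergence is pinned down, everything else is formal algebra.
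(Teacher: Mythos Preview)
Your proof is correct. The derivations of (\ref{eq:69})--(\ref{eq:73}) match the paper's essentially line for line: the paper also obtains Newton's recurrence by writing $-f'(z)=f(z)\sum_k\lambda_k/(1-z\lambda_k)$, expanding both sides as power series, and equating coefficients, and then deduces the determinant formula by Cramer's rule applied to the resulting triangular system.

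Where you genuinely diverge is in the proof of (\ref{eq:68}). The paper argues by induction on $n$: starting from the $n=1$ case $-f'(z)/f(z)=\sum_k\lambda_k/(1-z\lambda_k)$, it differentiates the assumed identity for $(-1)^nf^{(n)}(z)$, producing a product-rule expansion with one term containing $f'(z)$ and $n$ terms in which some factor $(1-z\lambda_{k_j})^{-1}$ has been squared. It then rewrites the $f'(z)$ term using the logarithmic-derivative formula and splits the resulting sum over $k$ according to whether $k$ coincides with one of $k_1,\dots,k_n$ or falls into one of the $n+1$ gaps $(1,k_1),(k_1,k_2),\dots,(k_n,\infty)$; the coincidence cases cancel the squared terms, and each of the $n+1$ gap cases, after relabelling, contributes the same ordered sum, yielding the factor $n+1$. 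Your generating-variable device $\prod_k\bigl(1+t\,\lambda_k/(1-z\lambda_k)\bigr)=f(z-t)/f(z)$ bypasses this combinatorial bookkeeping entirely: the identity holds factor by factor, the summability hypothesis makes both sides entire in $t$, and comparing Taylor coefficients delivers (\ref{eq:68}) for all $n$ at once. Your route is shorter and more conceptual; the paper's inductive argument is more hands-on but requires the careful case split to see the cancellations.
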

\begin{proof}
Clearly, the condition \eqref{eq:66} implies that \eqref{eq:67}
and \eqref{eq:68} converge absolutely and uniformly on any compact
subset of $\mathbb{C}$. Thus, $f(z)$ is an entire function. Then
we have\begin{align}
(-1)f'(z) & =f(z)\sum_{n=1}^{\infty}\frac{\lambda_{n}}{1-z\lambda_{n}}.\label{eq:74}\end{align}
Assume that \eqref{eq:68} is true for some positive integer $n$,
or\begin{align}
 & (-1)^{n}f^{(n)}(z)=n!f(z)\label{eq:75}\\
 & \times\sum_{1\le k_{1}<k_{2}\dots<k_{n}}\frac{\lambda_{k_{1}}\lambda_{k_{2}}\dotsb\lambda_{k_{n}}}{(1-z\lambda_{k_{1}})(1-z\lambda_{k_{2}})\dotsb(1-z\lambda_{k_{n}})},\nonumber \end{align}
then,\begin{align}
 & (-1)^{n+1}f^{(n+1)}(z)=(-1)n!f'(z)\label{eq:76}\\
 & \times\sum_{1\le k_{1}<k_{2}\dots<k_{n}}\frac{\lambda_{k_{1}}\lambda_{k_{2}}\dotsb\lambda_{k_{n}}}{(1-z\lambda_{k_{1}})(1-z\lambda_{k_{2}})\dotsb(1-z\lambda_{k_{n}})}\nonumber \\
 & -n!f(z)\sum_{1\le k_{1}<k_{2}\dots<k_{n}}\frac{\lambda_{k_{1}}^{2}\lambda_{k_{2}}\dotsb\lambda_{k_{n}}}{(1-z\lambda_{k_{1}})^{2}(1-z\lambda_{k_{2}})\dotsb(1-z\lambda_{k_{n}})}\nonumber \\
 & -n!f(z)\sum_{1\le k_{1}<k_{2}\dots<k_{n}}\frac{\lambda_{k_{1}}\lambda_{k_{2}}^{2}\dotsb\lambda_{k_{n}}}{(1-z\lambda_{k_{1}})(1-z\lambda_{k_{2}})^{2}\dotsb(1-z\lambda_{k_{n}})}\nonumber \\
 & -\dots\nonumber \\
 & -n!f(z)\sum_{1\le k_{1}<k_{2}\dots<k_{n}}\frac{\lambda_{k_{1}}\lambda_{k_{2}}\dotsb\lambda_{k_{n}}^{2}}{(1-z\lambda_{k_{1}})(1-z\lambda_{k_{2}})\dotsb(1-z\lambda_{k_{n}})^{2}}\nonumber \\
 & =n!f(z)\left\{ \sum_{1\le k_{1}<k_{2}\dots<k_{n}}\sum_{k=1}^{\infty}\frac{\lambda_{k}}{1-z\lambda_{k}}\frac{\lambda_{k_{1}}\lambda_{k_{2}}\dotsb\lambda_{k_{n}}}{(1-z\lambda_{k_{1}})(1-z\lambda_{k_{2}})\dotsb(1-z\lambda_{k_{n}})}\right.\nonumber \\
 & -\sum_{1\le k_{1}<k_{2}\dots<k_{n}}\frac{\lambda_{k_{1}}^{2}\lambda_{k_{2}}\dotsb\lambda_{k_{n}}}{(1-z\lambda_{k_{1}})^{2}(1-z\lambda_{k_{2}})\dotsb(1-z\lambda_{k_{n}})}\nonumber \\
 & -\sum_{1\le k_{1}<k_{2}\dots<k_{n}}\frac{\lambda_{k_{1}}\lambda_{k_{2}}^{2}\dotsb\lambda_{k_{n}}}{(1-z\lambda_{k_{1}})(1-z\lambda_{k_{2}})^{2}\dotsb(1-z\lambda_{k_{n}})}\nonumber \\
 & -\dots\nonumber \\
 & \left.-\sum_{1\le k_{1}<k_{2}\dots<k_{n}}\frac{\lambda_{k_{1}}\lambda_{k_{2}}\dotsb\lambda_{k_{n}}^{2}}{(1-z\lambda_{k_{1}})(1-z\lambda_{k_{2}})\dotsb(1-z\lambda_{k_{n}})^{2}}\right\} .\nonumber \end{align}
The first sum within the braces could be split into several sums according
to whether $k$ equals one of these $k_{1},\dots,k_{n}$ or in one
of the $n+1$ intervals:\begin{equation}
(1,k_{1}),(k_{1},k_{2}),\dots(k_{n},\infty).\label{eq:77}\end{equation}
It is clear that the $n$ sums in the first case cancel out the last
$n$ negative sums within the braces, while each of the $n+1$ sums
obtained from the last case, after renaming the dummy variables, equals
\begin{align}
\sum_{1\le k_{1}<k_{2}\dots<k_{n+1}}\frac{\lambda_{k_{1}}\lambda_{k_{2}}\dotsb\lambda_{k_{n+1}}}{(1-z\lambda_{k_{1}})(1-z\lambda_{k_{2}})\dotsb(1-z\lambda_{k_{n+1}})},\label{eq:78}\end{align}
which implies that \eqref{eq:68} holds for $n+1$, and the proof
of \eqref{eq:67} is finished by the principle of induction. Observe
that\begin{align}
-f'(z) & =\sum_{n=0}^{\infty}(-1)^{n}(n+1)\sigma_{n+1}z^{n}\label{eq:79}\end{align}
 and\begin{align}
\sum_{n=1}^{\infty}\frac{\lambda_{n}}{1-z\lambda_{n}} & =\sum_{n=0}^{\infty}s_{n+1}z^{n},\label{eq:80}\end{align}
 equation \eqref{eq:74} becomes\begin{align}
\sum_{n=0}^{\infty}(-1)^{n}(n+1)\sigma_{n+1}z^{n} & =\left(\sum_{n=0}^{\infty}(-1)^{n}\sigma_{n}z^{n}\right)\left(\sum_{n=0}^{\infty}s_{n+1}z^{n}\right).\label{eq:81}\end{align}
We get \eqref{eq:72} by equating the corresponding coefficients of
$z^{n}$. \eqref{eq:73} is obtained from \eqref{eq:72} by solving
for $\left\{ \frac{s_{1}}{c},\frac{s_{2}}{c^{2}},\dots,\frac{s_{n}}{c^{n}}\right\} $
with Cramer's rule. 
\end{proof}

\section{Applications\label{sec:Applications}}

\subsection{Sine function \textmd{$\sin(z)$}}

In the Theorem \ref{thm:3.1} we take \begin{align}
\lambda_{k} & =\frac{1}{k^{2}},\quad k\in\mathbb{N},\label{eq:82}\end{align}
then,\begin{align}
f(z) & =\prod_{k=1}^{\infty}\left(1-\frac{z}{k^{2}}\right)=\frac{\sin\left(\pi\sqrt{z}\right)}{\pi\sqrt{z}}.\label{eq:83}\end{align}
Since\begin{align}
\frac{\sin\left(\pi\sqrt{z}\right)}{\pi\sqrt{z}} & =\sum_{n=0}^{\infty}\frac{\pi^{2k}}{(2k+1)!}(-z)^{k},\label{eq:84}\end{align}
then,\begin{align}
\sum_{1\le k_{1}<k_{2}<\dots<k_{n}}\frac{1}{k_{1}^{2}\cdot k_{2}^{2}\cdots k_{n}^{2}} & =\frac{\pi^{2n}}{(2n+1)!},\label{eq:85}\end{align}
which is known, see \cite{Borwein}. In this case $s_{k}$ has a very
nice formula, which was discovered by Euler \cite{Andrews2}: \begin{align}
\zeta(2k) & =\sum_{n=1}^{\infty}\frac{1}{n^{2k}}=\frac{(-1)^{k-1}2^{2k-1}B_{2k}\pi^{2k}}{(2k)!},\label{eq:86}\end{align}
 where $B_{2k}$ is the $2k$-th Bernoulli number defined by\begin{align}
\frac{x}{e^{x}-1} & =\sum_{n=0}^{\infty}B_{n}\frac{x^{n}}{n!}.\label{eq:87}\end{align}
Then, apply \eqref{eq:73} with $c=-\pi^{2}$ to get

\begin{align}
 & \frac{2^{2n-1}B_{2n}}{(2n)!}\label{eq:88}\\
= & \det\left(\begin{array}{cccccc}
1 & 0 & 0 & \dots & 0 & \frac{1}{3!}\\
\frac{1}{3!} & 1 & 0 & \dots & 0 & \frac{2}{5!}\\
\frac{1}{5!} & \frac{1}{3!} & 1 & \dots & 0 & \frac{3}{7!}\\
\vdots & \vdots & \vdots & \ddots & \vdots & \vdots\\
\frac{1}{(2n-3)!} & \frac{1}{(2n-5)!} & \frac{1}{(2n-7)!} & \dots & 1 & \frac{(n-1)}{(2n-1)!}\\
\frac{1}{(2n-1)!} & \frac{1}{(2n-3)!} & \frac{1}{(2n-5)!} & \dots & \frac{1}{3!} & \frac{n}{(2n+1)!}\end{array}\right),\nonumber \end{align}
 which is known.

\subsection{Bessel function $J_{\nu}(z)$}

Take \begin{align}
\lambda_{k} & =\frac{1}{j_{\nu,k}^{2}},\quad k\in\mathbb{N},\label{eq:89}\end{align}
in Theorem \ref{thm:3.1}, then,\begin{align}
f(z) & =\frac{2^{\nu}\Gamma(\nu+1)J_{\nu}(z^{1/2})}{z^{\nu/2}}=\prod_{n=1}^{\infty}\left(1-\frac{z}{j_{\nu,n}^{2}}\right)\label{eq:90}\end{align}
has the series expansion\begin{align}
f(z) & =\sum_{n=0}^{\infty}\frac{(-1)^{n}\Gamma(\nu+1)z^{n}}{n!2^{2n}\Gamma(\nu+n+1)}.\label{eq:91}\end{align}
Hence\begin{align}
\sum_{1\le k_{1}<k_{2}<\dots<k_{n}}\frac{1}{j_{\nu,k_{1}}^{2}\cdot j_{\nu,k_{2}}^{2}\cdots j_{\nu,k_{n}}^{2}} & =\frac{\Gamma(\nu+1)}{n!2^{2n}\Gamma(\nu+n+1)},\label{eq:92}\end{align}
or\begin{align}
\sum_{1\le k_{1}<k_{2}<\dots<k_{n}}\frac{1}{j_{\nu,k_{1}}^{2}\cdot j_{\nu,k_{2}}^{2}\cdots j_{\nu,k_{n}}^{2}} & =\frac{1}{n!4^{n}(\nu+1)_{n}}.\label{eq:93}\end{align}
Then, from \eqref{eq:73} with $c=-\frac{1}{4}$ to obtain\begin{align}
 & \sum_{k=1}^{\infty}\frac{4^{n}(-1)^{n-1}}{j_{\nu,k}^{2n}}\label{eq:94}\\
 & =\det\left(\begin{array}{cccccc}
1 & 0 & 0 & \dots & 0 & \frac{1}{(\nu+1)}\\
\frac{1}{(\nu+1)} & 1 & 0 & \dots & 0 & \frac{1}{(\nu+1)_{2}}\\
\frac{1}{2!(\nu+1)_{2}} & \frac{1}{(\nu+1)} & 1 & \dots & 0 & \frac{1}{2!(\nu+1)_{3}}\\
\vdots & \vdots & \vdots & \ddots & \vdots & \vdots\\
\frac{1}{(n-2)!(\nu+1)_{n-2}} & \frac{1}{(n-3)!(\nu+1)_{n-3}} & \frac{1}{(n-4)!(\nu+1)_{n-4}} & \dots & 1 & \frac{1}{(n-2)!(\nu+1)_{n-1}}\\
\frac{1}{(n-1)!(\nu+1)_{n-1}} & \frac{1}{(n-2)!(\nu+1)_{n-2}} & \frac{1}{(n-3)!(\nu+1)_{n-3}} & \dots & \frac{1}{(\nu+1)} & \frac{1}{(n-1)!(\nu+1)_{n}}\end{array}\right).\nonumber \end{align}
Here are the first few $s_{n}$s, \cite{Watson},

\begin{align}
s_{1} & =\frac{1}{4(\nu+1)_{1}},\label{eq:95}\\
s_{2} & =\frac{1}{4^{2}\prod_{j=1}^{2}(\nu+1)_{j}},\label{eq:96}\\
s_{3} & =\frac{2(\nu+2)_{1}}{4^{3}\prod_{j=1}^{3}(\nu+1)_{j}},\label{eq:97}\\
s_{4} & =\frac{(5\nu+11)(v+2)_{2}}{4^{4}\prod_{j=1}^{4}(\nu+1)_{j}},\label{eq:98}\\
s_{5} & =\frac{2(7\nu+19)(\nu+2)_{2}(\nu+2)_{3}}{4^{5}\prod_{j=1}^{5}(\nu+1)_{j}}.\label{eq:99}\end{align}

\subsection{Airy function $A(z)$}

Take\begin{align}
\lambda_{k} & =\frac{1}{i_{k}^{2}},\quad k\in\mathbb{N}.\label{eq:100}\end{align}
From \eqref{eq:21} and \eqref{eq:22} to obtain \begin{align}
\sum_{1\le k_{1}<k_{2}<\dots<k_{n}}\frac{1}{i_{k_{1}}^{2}\cdot i_{k_{2}}^{2}\cdots i_{k_{n}}^{2}} & =\frac{\sqrt{3}\Gamma\left(\frac{2}{3}\right)^{2}}{\sqrt[3]{4}\pi}\frac{16^{n/3}\Gamma\left(\frac{n}{3}+\frac{1}{6}\right)\Gamma\left(\frac{n}{3}+\frac{1}{2}\right)}{(2n)!}.\label{eq:101}\end{align}
Let\begin{equation}
a(n)=\frac{\sqrt{3}\Gamma\left(\frac{2}{3}\right)^{2}}{\sqrt[3]{4}\pi}\frac{\Gamma\left(\frac{n}{3}+\frac{1}{6}\right)\Gamma\left(\frac{n}{3}+\frac{1}{2}\right)}{(2n)!},\label{eq:102}\end{equation}
then,\begin{align}
 & \sum_{k=1}^{\infty}\frac{(-1)^{n-1}}{i_{k}^{2n}2^{4n/3}}\label{eq:103}\\
= & \det\left(\begin{array}{cccccc}
1 & 0 & 0 & \dots & 0 & a(1)\\
a(1) & 1 & 0 & \dots & 0 & 2a(2)\\
a(2) & a(1) & 1 & \dots & 0 & 3a(3)\\
\vdots & \vdots & \vdots & \ddots & \vdots & \vdots\\
a(n-2) & a(n-3) & a(n-4) & \dots & 1 & (n-1)a(n-1)\\
a(n-1) & a(n-2) & a(n-3) & \dots & -a(1) & na(n)\end{array}\right).\nonumber \end{align}
The first five $s_{n}$ are:\begin{align}
s_{1} & =\frac{\sqrt{3}\Gamma\left(\frac{2}{3}\right)^{2}\Gamma\left(\frac{5}{6}\right)}{2^{1/3}\sqrt{\pi}}\label{eq:104}\\
s_{2} & =\frac{3\Gamma\left(\frac{2}{3}\right)^{4}\Gamma\left(\frac{5}{6}\right)^{2}}{2^{2/3}\pi}-\frac{\Gamma\left(\frac{2}{3}\right)\Gamma\left(\frac{5}{3}\right)}{2\sqrt{3}},\label{eq:105}\\
s_{3} & =\frac{\pi}{90}+\frac{3\sqrt{3}\Gamma\left(\frac{2}{3}\right)^{6}\Gamma\left(\frac{5}{6}\right)^{3}}{2\pi^{3/2}}-\frac{3\Gamma\left(\frac{2}{3}\right)^{3}\Gamma\left(\frac{5}{6}\right)\Gamma\left(\frac{5}{3}\right)}{42^{1/3}\sqrt{\pi}},\label{eq:106}\\
s_{4} & =\frac{1}{54}\Gamma\left(\frac{2}{3}\right)^{4}+\frac{(56\pi-5)\Gamma\left(\frac{2}{3}\right)^{2}\Gamma\left(\frac{5}{6}\right)}{12602^{1/3}\sqrt{3\pi}}\label{eq:107}\\
+ & \frac{92^{2/3}\Gamma\left(\frac{2}{3}\right)^{8}\Gamma\left(\frac{5}{6}\right)^{4}-22^{1/3}\sqrt{3}\pi\Gamma\left(\frac{2}{3}\right)^{5}\Gamma\left(\frac{5}{6}\right)^{2}\Gamma\left(\frac{5}{3}\right)}{4\pi^{2}},\nonumber \\
s_{5} & =\frac{5\Gamma\left(\frac{2}{3}\right)^{6}\Gamma\left(\frac{5}{6}\right)}{362^{1/3}\sqrt{3\pi}}+\frac{(56\pi-5)\Gamma\left(\frac{2}{3}\right)^{4}\Gamma\left(\frac{5}{6}\right)^{2}}{10082^{2/3}\pi}\label{eq:108}\\
- & \frac{5\Gamma\left(\frac{2}{3}\right)^{8}\Gamma\left(\frac{5}{6}\right)^{3}}{4\pi^{3/2}}+\frac{9\sqrt{3}\Gamma\left(\frac{2}{3}\right)^{10}\Gamma\left(\frac{5}{6}\right)^{5}}{22^{2/3}\pi^{5/2}}+\frac{(1-36\pi)\Gamma\left(\frac{2}{3}\right)\Gamma\left(\frac{8}{3}\right)}{12960\sqrt{3}}.\nonumber \end{align}

\subsection{$q$-Bessel function $J_{\nu}^{(2)}(z;q)$}

Take\begin{align}
\lambda_{k} & =\frac{1}{j_{\nu,k}^{2}(q)},\quad k\in\mathbb{N}.\label{eq:109}\end{align}
From\begin{align}
 & \frac{2^{\nu}(q;q)_{\infty}J_{\nu}^{(2)}(z^{1/2};q)}{(q^{\nu+1};q)_{\infty}z^{\nu/2}}=\prod_{n=1}^{\infty}\left(1-\frac{z}{j_{\nu,n}^{2}}\right)\label{eq:110}\\
 & =\sum_{n=0}^{\infty}\frac{(-1)^{n}q^{n(n+\nu)}z^{n}}{(q,q^{\nu+1};q)_{n}2^{2n}},\nonumber \end{align}
to obtain\begin{align}
\sum_{1\le k_{1}<k_{2}<\dots<k_{n}}\frac{1}{j_{\nu,k_{1}}^{2}(q)\cdot j_{\nu,k_{2}}^{2}(q)\cdots j_{\nu,k_{n}}^{2}(q)} & =\frac{q^{n(n+\nu)}}{4^{n}(q,q^{\nu+1};q)_{n}}.\label{eq:111}\end{align}
and\begin{align}
 & \sum_{k=1}^{\infty}\frac{(-1)^{n-1}4^{n}}{q^{n\nu}j_{\nu,k}^{2n}(q)}\label{eq:112}\\
= & \det\left(\begin{array}{cccccc}
1 & 0 & 0 & \dots & 0 & b(1;q)\\
b(1;q) & 1 & 0 & \dots & 0 & 2b(2;q)\\
b(2;q) & b(1;q) & 1 & \dots & 0 & 3b(3;q)\\
\vdots & \vdots & \vdots & \ddots & \vdots & \vdots\\
b(n-2;q) & b(n-3;q) & b(n-4;q) & \dots & 1 & (n-1)b(n-1;q)\\
b(n-1;q) & b(n-2;q) & b(n-3;q) & \dots & b(1;q) & nb(n;q)\end{array}\right),\nonumber \end{align}
 where\begin{align}
b(n;q): & =\frac{q^{n^{2}}}{(q,q^{\nu+1};q)_{n}}.\label{eq:113}\end{align}
The first three $s_{n}$s are:

\begin{align}
s_{1} & =\frac{q^{\nu+1}}{4(1-q)(1-q^{\nu+1})},\label{eq:114}\\
s_{2} & =\frac{q^{2(\nu+1)}(1+2q-q^{\nu+2})}{4^{2}(1-q^{2})(1-q^{\nu+1})(q^{\nu+1};q)_{2}},\label{eq:115}\\
s_{3} & =\frac{q^{3(\nu+1)}\left(1+3q+3q^{2}+3q^{3}-q^{\nu+2}-q^{\nu+3}-3q^{\nu+4}+q^{2\nu+5}\right)}{4^{3}(1-q^{3})\left(1-q^{\nu+1}\right)^{2}(q^{\nu+1};q)_{3}}.\label{eq:116}\end{align}

\subsection{Ramanujan's entire function $A_{q}(z)$}

Take\begin{equation}
\lambda_{k}=\frac{1}{i_{k}(q)},\quad k\in\mathbb{N}.\label{eq:117}\end{equation}
From\begin{align}
A_{q}(z) & =\sum_{n=0}^{\infty}\frac{q^{n^{2}}(-1)^{n}z^{n}}{(q;q)_{n}}=\prod_{n=1}^{\infty}\left(1-\frac{z}{i_{n}(q)}\right)\label{eq:118}\end{align}
to get\begin{align}
\sum_{1\le k_{1}<k_{2}<\dots<k_{n}}\frac{1}{i_{k_{1}}(q)\cdot i_{k_{2}}(q)\cdots i_{k_{n}}(q)} & =\frac{q^{n^{2}}}{(q;q)_{n}},\label{eq:119}\end{align}
 and\begin{align}
 & \sum_{k=1}^{\infty}\frac{(-1)^{n-1}}{i_{k}^{n}(q)}\label{eq:120}\\
= & \det\left(\begin{array}{cccccc}
1 & 0 & 0 & \dots & 0 & \frac{q}{1-q}\\
\frac{q}{1-q} & 1 & 0 & \dots & 0 & \frac{2q^{4}}{(q;q)_{2}}\\
\frac{q}{1-q} & \frac{q}{1-q} & 1 & \dots & 0 & \frac{3q^{9}}{(q;q)_{3}}\\
\vdots & \vdots & \vdots & \ddots & \vdots & \vdots\\
\frac{q^{(n-2)^{2}}}{(q;q)_{n-2}} & \frac{q^{(n-3)^{2}}}{(q;q)_{n-3}} & \frac{q^{(n-4)^{2}}}{(q;q)_{n-4}} & \dots & 1 & \frac{(n-1)q^{(n-1)^{2}}}{(q;q)_{n-1}}\\
\frac{q^{(n-1)^{2}}}{(q;q)_{n-1}} & \frac{q^{(n-2)^{2}}}{(q;q)_{n-2}} & \frac{q^{(n-3)^{2}}}{(q;q)_{n-3}} & \dots & \frac{q}{1-q} & \frac{nq^{n^{2}}}{(q;q)_{n}}\end{array}\right).\nonumber \end{align}
The first five $s_{n}$s are:

\begin{align}
s_{1} & =\frac{q}{1-q},\label{eq:121}\\
s_{2} & =\frac{q^{2}(1+2q)}{1-q^{2}},\label{eq:122}\\
s_{3} & =\frac{q^{3}(1+3q+3q^{2}+3q^{3})}{1-q^{3}},\label{eq:123}\\
s_{4} & =\frac{q^{4}(1+2q+2q^{3})(1+2q+2q^{2}+2q^{3})}{1-q^{4}},\label{eq:124}\\
s_{5} & =\frac{q^{5}}{1-q^{5}}\label{eq:125}\\
\times & (1+5q+10q^{2}+15q^{3}+20q^{4}+20q^{5}+20q^{6}+15q^{7}+10q^{8}+5q^{9}+5q^{10}).\nonumber \end{align}

\subsection{Riemann zeta function $\zeta(s)$}

Take \begin{equation}
\lambda_{k}=\frac{1}{z_{k}^{2}},\quad k\in\mathbb{N}.\label{eq:126}\end{equation}
From \eqref{eq:46} to obtain\begin{align}
\sum_{1\le k_{1}<k_{2}<\dots<k_{n}}\frac{1}{z_{k_{1}}^{2}\cdot z_{k_{2}}^{2}\cdots z_{k_{n}}^{2}}=\frac{b_{n}}{(2n)!b_{0}} & ,\label{eq:127}\end{align}
 and\begin{align}
 & \sum_{k=1}^{\infty}\frac{(-1)^{n-1}b_{0}^{n}}{z_{k}^{2n}}\label{eq:128}\\
= & \det\left(\begin{array}{cccccc}
b_{0} & 0 & 0 & \dots & 0 & \frac{b_{1}}{2!}\\
\frac{b_{1}}{2!} & 1 & 0 & \dots & 0 & \frac{2b_{2}}{4!}\\
\frac{b_{2}}{4!} & \frac{b_{1}}{2!} & 1 & \dots & 0 & \frac{3b_{3}}{6!}\\
\vdots & \vdots & \vdots & \ddots & \vdots & \vdots\\
\frac{b_{n-2}}{(2n-4)!} & \frac{b_{n-3}}{(2n-6)!} & \frac{b_{n-4}}{(2n-8)!} & \dots & 1 & \frac{(n-1)b_{n-1}}{(2n-2)!}\\
\frac{b_{n-1}}{(2n-2)!} & \frac{b_{n-2}}{(2n-4)!} & \frac{b_{n-3}}{(2n-6)!} & \dots & \frac{b_{1}}{2!} & \frac{nb_{n}}{(2n)!}\end{array}\right).\nonumber \end{align}
The first four $s_{n}$s are:

\begin{align}
s_{1} & =\frac{b_{1}}{2b_{0}},\label{eq:129}\\
s_{2} & =\frac{3b_{1}^{2}-b_{0}b_{2}}{12b_{0}^{2}},\label{eq:130}\\
s_{3} & =\frac{30b_{1}^{3}-15b_{0}b_{1}b_{2}+b_{0}^{2}b_{3}}{240b_{0}^{3}},\label{eq:131}\\
s_{4} & =\frac{630b_{1}^{4}-420b_{0}b_{1}^{2}b_{2}+35b_{0}^{2}b_{2}^{2}+28b_{0}^{2}b_{1}b_{3}-b_{0}^{3}b_{4}}{10080b_{0}^{4}}.\label{eq:132}\end{align}
The Dirichlet $L$ series have similar formulas with $z_{k}$ being
replaced by $z_{k}(\chi,a)$, and $b_{k}$ by $b_{k}(\chi,a)$.

\begin{acknowledgement*}
This work is partially supported by Chinese National Natural Science
Foundation grant No.10761002, Guangxi Natural Science Foundation grant
No.0728090.
\end{acknowledgement*}

\end{document}